\def\beq{\begin{equation}}
\def\eeq{\end{equation}}
\theoremstyle{definition}
\newtheorem*{rema}{Remark}
\theoremstyle{theorem}
\newtheorem{theorem}{Theorem}
\newtheorem{lemma}{Lemma}
\newtheorem{corollary}{Corollary}
\theoremstyle{definition}
\theoremstyle{remark}
\newtheorem{remark}{Remark}
\def\ve{\varepsilon}
\title{A remark on density theorems for Riemann's zeta-function}
\author{by\\
J\'anos Pintz\thanks{Supported by the National Research Development and Innovation Office, NKFIH,  
KKP~133819.}}
\numberwithin{equation}{section}
\date{Dedicated to the memory of Jingrun Chen on the occasion of the 50\textsuperscript{th}
anniversary of the publication of his celebrated $(1+2)$ theorem on
Goldbach's conjecture}
\begin{document}

\maketitle

\section{}
\label{sec:1}



\footnotetext{Keywords and phrases: Riemann's zeta function, density hypothesis, density theorems.}

\footnotetext{2020 Mathematics Subject Classification: Primary 11M26, Secondary 11M06.}

The plausibility of the Riemann Hypothesis (RH) is supported (among others) by results of type
\beq
\label{eq:1.1}
N(\sigma, T) := \sum_{\substack{\zeta(\beta + i\gamma) = 0\\
\beta \geq \sigma, \ 0 \leq \gamma \leq T}} = o(N(T)) \ \text{ with }\ N(T) := \sum_{\substack{\zeta (\beta + i \gamma) = 0\\
0 \leq \gamma \leq T}} 1 \sim \frac{T}{2\pi} \log T
\eeq
valid for all $\sigma > 1/2$ as $T \to \infty$.
\eqref{eq:1.1} was first shown by Bohr and Landau in 1914 (\cite{BL1914}).
The first estimate of type
\beq
\label{eq:1.2}
N(\sigma, T) \ll_\varepsilon T^{A(\sigma)(1 - \sigma) + \ve} \ \text{ for any } \ \ve > 0 \ \text{ and } \ \sigma \geq 1/2
\eeq
was shown few years later by Carlson \cite{Car1920} with
\beq
\label{eq:1.3}
A(\sigma) \leq 4\sigma.
\eeq

It was Hoheisel \cite{Hoh1930} who first observed that such estimates lead to arithmetic consequences about
the difference of consecutive primes.
He proved the first approximation towards the famous conjecture that there exists always a prime between two consecutive squares.
This conjecture was characterised by Landau \cite{Lan1913} in his plenary talk at ICM1912 in Cambridge as one of the four main problems of the distribution of primes,
besides the Riemann Hypothesis.
Hoheisel \cite{Hoh1930} could show ($p_n$ denotes the $n$\textsuperscript{th} prime)
\beq
\label{eq:1.4}
p_{n + 1} - p_n \ll p_n^\theta \ \text{ with } \ \theta = 1 - \frac1{33\,000}.
\eeq
In the proof important role was played by Carlson's density theorem \eqref{eq:1.2}--\eqref{eq:1.3}.

Later it was realized that a uniform estimate of the form
\beq
\label{eq:1.5}
A(\sigma) \leq A \ \text{ for all } \ \sigma \geq \frac12
\eeq
yields (combined with a slightly better zero-free region than the classical one of de la Vall\'ee Poussin)
\beq
\label{eq:1.6}
p_{n + 1} - p_n \ll_\ve p_n^{1 - 1/A + \ve} \ \text{ for any } \ \ve > 0.
\eeq
In particular, the best possible estimate $A = 2$ would almost imply Landau's conjecture.
This is especially remarkable in light of the fact that even assuming the Riemann Hypothesis the best estimate we know is
\beq
\label{eq:1.7}
p_{n + 1} - p_n \ll p_n^{1/2} \log^C p_n
\eeq
(with $C = 2$ easily by the Riemann--Von Mangoldt explicit formula, with $C = 1$ by a deeper argument of Cram\'er \cite{Cra1921}).

This explains the significance of the Density Hypothesis (DH) which states
\beq
\label{eq:1.8}
N(\sigma, T) \ll T^{2(1 - \sigma)} \log^C T \ \text{ with some }\ C > 0 \ \text{ for all } \sigma \geq \frac12,
\eeq
or, in a slightly weaker form, using the notation \eqref{eq:1.2},
\beq
\label{eq:1.9}
A(\sigma) \leq 2 \ \text{ for all } \ \sigma \geq 1/2 \ \ \left(\Longleftrightarrow N(\sigma, T) \ll_\ve T^{2(1 - \sigma) + \ve}\right).
\eeq

The Riemann Hypothesis clearly implies the Density Hypothesis.
However, Ingham \cite{Ing1937} showed that also the Lindel\"of Hypothesis (LH)
\beq
\label{eq:1.10}
\mu\left(\frac12\right) = 0, \ \text{ where }\ \mu(\alpha) = \text{\rm inf} \left\{\mu; |\zeta(\sigma + it)| \leq T^\mu \text{ for } \sigma \geq \alpha, \, 1 < |t| \leq T\right\}
\eeq
implies DH.

In 1954 Tur\'an \cite{Tur1954} used his celebrated power-sum method \cite{Tur1953}, \cite{Tur1984} to give a different proof of Ingham's result that LH implies DH.
In the same work he came very close to breaking the DH in the vicinity of the boundary line $\sigma = 1$.
In the following we will use the notation:
\beq
\label{eq:1.11}
s = \sigma + it, \ \ \sigma = 1 - \eta, \ \ A(1 - \eta) = B(\eta).
\eeq
His result was with a small constant $c_1$ \cite{Tur1954}
\beq
\label{eq:1.12}
N(1 - \eta, T) \ll T^{2\eta + \eta^{1.14}} \log^6 T \ \text{ for } \ \eta < c_1,
\eeq
or with our new notation
\beq
\label{eq:1.13}
B(\eta) \leq 2 + \eta^{0.14} \ \text{ for } \eta < c_1.
\eeq

It was 16 years later when G. Hal\'asz and Tur\'an succeeded to break the DH \cite{HT1969}, that is, to prove (in the refined form appearing as Theorem 38.2 of Tur\'an's book \cite{Tur1984}) with the notation \eqref{eq:1.1}--\eqref{eq:1.2}, \eqref{eq:1.11}
\beq
\label{eq:1.14}
B(\eta) \leq 1.2 \cdot 10^5 \eta^{1/2}, \ \text{ more precisely } \
N(\sigma, T) < T^{1.2 \cdot 10^5 \eta^{3/2}} \log^C T.
\eeq

The proof was based on
\begin{itemize}
\item[(i)] Vinogradov's estimate $\mu(1 - \eta) \ll \eta^{3/2}$;
\item[(ii)] Tur\'an's power-sum method;
\item[(iii)] a simple but ingenious idea of Hal\'asz \cite{Hal1968}.
\end{itemize}

Soon after this Bombieri \cite{Bom1971} gave a different proof using ideas from the large sieve coupled with (i) and (iii).
Further, Montgomery has shown the DH for $\sigma \geq 9/10$, i.e. $\eta \leq 1/10$
(\cite{Mon1969}, \cite{Mon1971}).
Using another simple but ingenious idea Huxley \cite{Hux1972} (with a refinement of Montgomery) reached the DH for $\eta \leq 1/6$ and proved that
\beq
\label{eq:1.15}
A(\sigma) \leq 12/5\ \text{ for }\ \sigma > 1/2.
\eeq

The DH was shown for larger and larger ranges, until improving the result $\sigma > 11/14$ of Jutila \cite{Jut1977} Bourgain proved its validity \cite{Bou2000} for
\beq
\label{eq:1.16}
\sigma \geq 25/32 \Longleftrightarrow \eta \leq 7/32.
\eeq

Another direction of the research was to give strong density estimates for small values of $\eta$ (often especially for $\eta \to 0$).
In this direction the most important results were reached (in alphabetical order) by Bourgain, Ford, Heath-Brown, Huxley, Ivic, Jutila and Montgomery.

All these results use the large sieve and in some form the mentioned idea of Hal\'asz, which needs $\eta \leq 1/4$.
This is also the reason that they cannot give improvements of the classical zero-density theorem of Ingham
\cite{Ing1940}, $A(\sigma) \leq 3/(2 - \sigma)$, which is still the best today in the whole range
\beq
\label{eq:1.17}
\sigma \in \left[\frac12, \frac34\right] \Longleftrightarrow \eta \in \left[\frac14, \frac12\right].
\eeq

Recently I gave two alternative variants for proving the DH for small values of $\eta$.
In \cite{Pin2022} the goal was to reach a possibly simple proof while in \cite{Pin2023} to show the strongest possible result both in specific ranges of $\eta < 1/12$ and for $\eta \to 0$.
Both proofs were based on Vinogradov's method and Hal\'asz's idea.
However, in the second \cite{Pin2023} it was important to use recent deep results of Heath-Brown \cite{Hea2017}, further of Bourgain, Demeter and Guth \cite{BGD2016}.

\section{}
\label{sec:2}

The goal of the present work is
\begin{itemize}
\item[(i)] to give a possibly simple proof for all values of $\sigma > 3/4$, i.e. $\eta < 1/4$, which
\item[(ii)] breaks the DH for the non-negligible range $\sigma \geq 7/8$, i.e.\ $\eta \leq 1/8$, further gives
\item[(iii)] explicit density theorems for different ranges of all $\eta < 1/4$ with $B(\eta) \to 0$ as $\eta \to 0$ and
\item[(iv)] besides the (possibly indispensable) simple idea of Hal\'asz uses only classical knowledge of the theory of Riemann's zeta-function (classical in the sense that known since at least hundred years).
\end{itemize}

This means that we avoid the use of Tur\'an's method, the large sieve, Vinogradov's method, further mean and large value theorems.
However, to break the DH we need the estimate reached by the Weyl--Hardy--Littlewood method \cite{Lit1922} (see also (5.15.1) of the book of Titchmarsh \cite{Tit1951}, 2\textsuperscript{nd} edition, or the paper of Landau \cite{Lan1924}):
\beq
\label{eq:2.1}
\mu\left(1 - \frac1{2^\ell}\right) \leq \frac1{(\ell + 2)2^\ell}, \quad \ell \geq 0
\eeq
which is proved by Hardy and Littlewood in \cite{Lit1922} for $\ell \geq 1$, but it is also true for $\ell = 0$ $(\mu(0) = 1/2)$ by the functional equation and $\mu(1) = 0$.

Alternatively, the method of Van der Corput would yield results of similar type \cite{Cor1921}, \cite{Cor1922}.
However, the use of \eqref{eq:2.1} seems to be simpler, although Van der Corput's result
\beq
\label{eq:2.2}
\mu\left(1 - \frac{\ell}{2^\ell - 2}\right) \leq \frac1{2^\ell - 2} \qquad (\ell \geq 2)
\eeq
may yield even stronger estimates for $N(1 - \eta, T)$ in many ranges of $\eta$.

In order to formulate our results we need some more notation.
For $\ell \geq 1$ let
\begin{align}
\alpha_\ell &:= 1 - \frac1{2^\ell}, \qquad \sigma > \alpha_\ell \Longleftrightarrow \eta < \frac1{2^\ell},
\label{eq:2.3}\\
u_\ell &:= u_\ell(\eta) := \frac{\mu(\alpha_\ell)}{\sigma - \alpha_\ell} = \frac{\mu(\alpha_\ell)}{2^{-\ell} - \eta},
\label{eq:2.4}\\
v_\ell &:= v_\ell(\eta) := u_{\ell - 1} (2\eta)  = \frac{\mu(\alpha_{\ell - 1})}{2(2^{-\ell} - \eta)}.
\label{eq:2.5}
\end{align}
By the estimate \eqref{eq:2.1} of Hardy and Littlewood we have for $\eta < 1/2^\ell$
\beq
\label{eq:2.6}
u_\ell \leq \frac1{(\ell + 2)(1 - 2^\ell \eta)}, \qquad v_\ell \leq \frac1{(\ell + 1)(1 - 2^\ell \eta)}\ \text{ for }\ \ell \geq 1.
\eeq

\begin{theorem}
\label{th:1}
For $\ell \geq 1$, $\eta < \max(1/3, 2^{-\ell})$ we have
\beq
\label{eq:2.7}
B(\eta) \leq \max\bigl(4 u_\ell(\eta), 3v_\ell(\eta)\bigr).
\eeq
\end{theorem}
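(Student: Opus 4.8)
The plan is to derive \eqref{eq:2.7} from a zero-detection argument that converts information about the size of $\zeta$ near the line $\sigma=\alpha_\ell$ (and $\sigma=\alpha_{\ell-1}$) into a bound on $N(\sigma,T)$, using only the convexity bound \eqref{eq:2.1} together with Hal\'asz's idea. The point of departure is the classical observation that if $\rho=\beta+i\gamma$ is a zero with $\beta\geq\sigma=1-\eta$, then (via a mollifier/Dirichlet polynomial approximation and contour shift, or via the approximate functional equation) one obtains a lower bound for an associated Dirichlet polynomial $\sum_{n\leq N}a_n n^{-\rho}$, where $N$ is a parameter of size roughly a small power of $T$ and the coefficients $a_n$ come from inverting $\zeta$ against a smoothing factor. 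The zeros thus become ``large values'' of this Dirichlet polynomial. I would then bound the number of such large values not by the large sieve (which is exactly what the paper wants to avoid) but by an $L^2$-mean-value estimate together with a pointwise bound on the polynomial coming from \eqref{eq:2.1}; this is where the hypothesis $\eta<2^{-\ell}$ (equivalently $\sigma>\alpha_\ell$) enters, since it guarantees $u_\ell(\eta)$ is finite and the convexity estimate at $\alpha_\ell$ is applicable with positive ``room'' $\sigma-\alpha_\ell$.

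The $4u_\ell$ term should arise as follows. Write $\zeta(s)M(s)=1+\text{(tail)}$ for a Dirichlet polynomial mollifier $M(s)=\sum_{n\leq y}\mu(n)n^{-s}$ with $y=T^{\delta}$. On the critical region a zero forces the tail to be $\gg 1$ in absolute value, hence $\int_0^T|\text{tail}|^2\,dt$ is $\gg N(\sigma,T)$; on the other hand this integral is estimated by combining the $L^2$-mean (which contributes the $\log T$ powers and a factor like $y^{2(1-\sigma)}$, i.e.\ $T^{2\delta\eta}$) with the $L^\infty$-bound $|\zeta(\sigma+it)|\ll T^{\mu(\alpha_\ell)(\sigma-\alpha_\ell)^{-1}\cdot(\sigma-\alpha_\ell)+\ve}$... more precisely, interpolating $\mu$ linearly between $\alpha_\ell$ and the line $\sigma=1$ where $\mu=0$ gives $\mu(\sigma)\leq\mu(\alpha_\ell)\frac{1-\sigma}{1-\alpha_\ell}=\mu(\alpha_\ell)2^\ell\eta$, and optimizing $\delta$ balances the mean-value gain against the pointwise loss, producing the exponent $u_\ell(\eta)$ and, after accounting for the two-sided nature of the argument plus Hal\'asz's reflection device (which effectively squares or doubles the count), the factor $4$. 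The $3v_\ell$ term comes from running the same scheme but detecting zeros through $\zeta(s)^2$ or through a second-moment version (Ingham-type), where the relevant convexity input is taken at $\alpha_{\ell-1}$ with twice the room $2(2^{-\ell}-\eta)$, which is exactly the definition \eqref{eq:2.5} of $v_\ell$; the numerology $3=1+2$ reflects that the squared object has mean value contributing an extra factor and the interpolation is done over a segment of double length. Taking the better of the two strategies gives the $\max$ in \eqref{eq:2.7}.

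For the extended range $\eta<\max(1/3,2^{-\ell})$, when $2^{-\ell}\leq\eta<1/3$ the bound $u_\ell$ would be vacuous, so here I would instead use $\ell=1$ (i.e.\ $\alpha_1=1/2$, $\mu(1/2)$ via Lindel\"of-type convexity $\mu(1/2)\leq 1/4$ from the functional equation and the trivial bound, or rather the classical $\mu(1/2)\le 1/6$ — but the paper only assumes \eqref{eq:2.1} so $\mu(1/2)\le 1/6$ is \emph{not} available; with $\ell=1$, \eqref{eq:2.1} gives $\mu(1/2)\le 1/6$... actually $(\ell+2)2^\ell=6$ at $\ell=1$, so $\mu(1/2)\le1/6$ \emph{is} exactly what \eqref{eq:2.1} provides) and check directly that $4u_1(\eta)$ and $3v_1(\eta)$ remain finite and give a nontrivial bound throughout $[1/4,1/3)$ — here $v_1$ uses $\mu(\alpha_0)=\mu(0)=1/2$. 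So the unified statement is proved by applying the single argument with the optimal choice of $\ell$ subject to $\eta<2^{-\ell}$, and separately verifying the $\ell=1$ case covers $[1/4,1/3)$.

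The main obstacle I expect is the zero-detection step done \emph{without} a large-value theorem or the large sieve: one must extract from the mere existence of a zero a clean lower bound on a short Dirichlet polynomial and then control $\int_0^T$ of its square by hand, keeping all the $T^{\ve}$ and $\log^C T$ factors honest, and in particular making sure the mollifier length $\delta$ can be taken up to the threshold dictated by the convexity exponent at $\alpha_\ell$. The interplay of Hal\'asz's device with the fact that $\mu$ is only controlled at the dyadic point $\alpha_\ell$ (not throughout $[\alpha_\ell,1]$, where we must interpolate linearly) is the delicate quantitative heart of the estimate, and getting the constants $4$ and $3$ rather than something larger is where the ``ingenious simple idea'' does its work.
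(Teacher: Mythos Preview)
Your proposal captures the zero-detection setup (mollifier $M_X(s)=\sum_{n\le X}\mu(n)n^{-s}$, and the fact that at a zero $\rho$ a tail Dirichlet polynomial must be $\gg 1$), but the mechanism you describe for producing the constants $4$ and $3$ and the quantities $u_\ell,v_\ell$ is not the one that works, and would not deliver \eqref{eq:2.7}.

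Concretely: you propose bounding $\int_0^T|\text{tail}|^2\,dt$ and balancing a mean-value gain against a pointwise $\zeta$-bound obtained by \emph{linear interpolation} of $\mu$ between $\alpha_\ell$ and $1$. The paper does neither. No continuous $L^2$ mean over $[0,T]$ is taken (the paper explicitly avoids mean and large value theorems); instead Hal\'asz's idea is applied to the \emph{discrete} sum over the $K$ well-separated zeros: one squares $\sum_{j}\varphi_j\sum_{n}b_n^* n^{-\rho_j}$, interchanges, and applies Cauchy--Schwarz to reduce to estimating the exponential sums $\sum_{n\in I(M)} n^{-(1-\eta_j-\eta_\nu)+i(\gamma_\nu-\gamma_j)}$ over pairs $(j,\nu)$. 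The point is that this last sum has real part $1-2\eta$ (not $1-\eta$), and it is bounded by shifting a Perron integral to the line $\mathrm{Re}=\alpha_{\ell-1}$; this is exactly why $v_\ell=u_{\ell-1}(2\eta)$ appears, and why the condition for a good bound is $M\gg T^{v_\ell+\ve}$. There is no $\zeta(s)^2$ detector and no ``Ingham-type second moment'' in the argument.

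The numerology then comes from a \emph{power-raising} step you did not anticipate. The initial detecting polynomial has length $Y=T^{u_\ell+\ve}$ (this is the length that makes the contour shift to $\mathrm{Re}=\alpha_\ell$ give $o(1)$, and is where $u_\ell$ enters --- no interpolation of $\mu$ is used). A dyadic piece of length $U\in(X,Y_1)$ is then raised to an integer power $h$ so that $M=U^h$ lands in $(T^{v_\ell+\ve},T^{3(v_\ell+\ve)/2})$ when $U$ is small, or one simply takes $h=2$ when $U$ is already $\ge T^{(v_\ell+\ve)/2}$, giving $M\le Y_1^2$. Hal\'asz's inequality then yields $K\ll M^{2\eta}T^{o(1)}$, and $M\le\max(Y_1^2,T^{3v_\ell/2+\ve})$ produces the exponent $\eta\max(4u_\ell,3v_\ell)$. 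So the $4$ is ``square the polynomial, then Hal\'asz doubles the exponent'', and the $3$ is ``power up just past the threshold $T^{v_\ell}$, overshooting by at most a factor $3/2$, then Hal\'asz doubles''. Your optimization-of-mollifier-length picture and your $\zeta^2$ explanation would not reproduce these factors and would miss the crucial $1-2\eta$ abscissa that links $v_\ell$ to $\alpha_{\ell-1}$.
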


This implies a series of explicit density theorems for small values of $\eta$ and the conditional result of Hal\'asz and Tur\'an \cite{HT1969} that assuming (LH) we have $N(\sigma, T) \ll T^\ve$ for $\sigma > 3/4$, which is for $\sigma > 3/4$ a strong improvement of the theorem of Ingham \cite{Ing1937} stating $A(\sigma) = 2$ for $\sigma > 1/2$ on LH.

\begin{corollary}
\label{cor:1}
The Lindel\"of Hypothesis implies
\beq
\label{eq:2.8}
N(\sigma, T) \ll_\ve T^\ve \ \text{ for } \ \sigma > 3/4.
\eeq
\end{corollary}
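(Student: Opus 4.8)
The plan is to obtain Corollary~\ref{cor:1} as an immediate specialization of Theorem~\ref{th:1} with $\ell = 2$. First I would record that, by \eqref{eq:2.3}, the range $\sigma > 3/4$ is precisely $\eta < 1/4 = 2^{-2}$, and that for such $\eta$ the hypothesis of Theorem~\ref{th:1} with $\ell = 2$ holds, since $\eta < 1/4 < 1/3 = \max(1/3, 2^{-2})$; moreover the denominators $2^{-2} - \eta = 1/4 - \eta$ in \eqref{eq:2.4} and $2(2^{-2} - \eta) = 1/2 - 2\eta$ in \eqref{eq:2.5} are then strictly positive. Hence Theorem~\ref{th:1} yields, for every $\eta \in (0, 1/4)$, the bound $B(\eta) \le \max\bigl(4 u_2(\eta),\, 3 v_2(\eta)\bigr)$.

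Next I would evaluate the two constants under (LH). By \eqref{eq:2.4} and \eqref{eq:2.5}, with $\alpha_2 = 3/4$ and $\alpha_1 = 1/2$,
\[
u_2(\eta) = \frac{\mu(3/4)}{1/4 - \eta}, \qquad v_2(\eta) = u_1(2\eta) = \frac{\mu(1/2)}{1/2 - 2\eta}.
\]
Since $\mu$ is non-negative and non-increasing, the Lindel\"of Hypothesis $\mu(1/2) = 0$ forces also $\mu(3/4) = 0$, so $u_2(\eta) = v_2(\eta) = 0$ for all $\eta \in (0, 1/4)$, whence $B(\eta) \le 0$ on this whole range. Translating back through \eqref{eq:1.11} and \eqref{eq:1.2}, $B(\eta) \le 0$ means one may take $A(\sigma)(1 - \sigma) = B(\eta)\eta \le 0$, so that $N(\sigma, T) \ll_\ve T^{B(\eta)\eta + \ve} \le T^\ve$ for every $\ve > 0$ and every $\sigma > 3/4$; this is \eqref{eq:2.8}.

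Since the whole deduction is a one-line consequence of Theorem~\ref{th:1}, there is no real obstacle beyond that theorem itself. The one subtlety worth flagging is the choice $\ell = 2$: the argument needs the index $\ell - 1$ appearing in $v_\ell$ to satisfy $\alpha_{\ell - 1} \ge 1/2$, so that $\mu(\alpha_{\ell - 1})$ is annihilated by (LH); for $\ell = 1$ this fails ($\alpha_0 = 0$, $\mu(0) = 1/2 \neq 0$), and $\ell = 1$ would give only the weaker bound $B(\eta) \le 3/(2 - 4\eta)$. Thus $\ell = 2$ is simultaneously admissible on the full interval $\eta \in (0, 1/4)$ and the smallest index making the argument work; any larger $\ell$ would also work but only on the subrange $\eta < 2^{-\ell}$, so the single application with $\ell = 2$ is the cleanest route.
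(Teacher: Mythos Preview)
Your proposal is correct and follows exactly the paper's own approach: apply Theorem~\ref{th:1} with $\ell = 2$, observe that under LH one has $\mu(1/2) = \mu(3/4) = 0$, so $u_2(\eta) = v_2(\eta) = 0$ for $\eta < 1/4$, and conclude. Your write-up simply supplies more detail (and a helpful remark on why $\ell = 2$ is the right choice) than the paper's one-line proof.
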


\begin{proof}
Choosing $\ell = 2$, LH and the definition \eqref{eq:2.4}--\eqref{eq:2.5} yield $u_\ell(\eta) = v_\ell(\eta) = 0$ for $\eta < 1/4$.
\end{proof}

\begin{corollary}
\label{cor:2}
For $\ell \geq 2$, $\eta < \frac1{2^\ell}$ we have
\beq
\label{eq:2.9}
B(\eta) \leq \frac{4}{(\ell + 2)(1 - 2^\ell \eta)}.
\eeq
\end{corollary}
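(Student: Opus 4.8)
The plan is to derive Corollary~\ref{cor:2} as a direct specialization of Theorem~\ref{th:1}, using only the explicit Hardy--Littlewood bounds \eqref{eq:2.6}. First I would fix $\ell \geq 2$ and $\eta < 1/2^\ell$, and observe that for such $\ell$ we have $2^{-\ell} \leq 1/4 < 1/3 = \max(1/3, 2^{-\ell})$, so the hypothesis of Theorem~\ref{th:1} is satisfied and \eqref{eq:2.7} gives $B(\eta) \leq \max\bigl(4u_\ell(\eta), 3v_\ell(\eta)\bigr)$. It then suffices to bound each of the two terms inside the maximum by $4/\bigl((\ell+2)(1-2^\ell\eta)\bigr)$.

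Next I would invoke the two estimates in \eqref{eq:2.6}. For the first term, $4u_\ell(\eta) \leq 4/\bigl((\ell+2)(1-2^\ell\eta)\bigr)$ is immediate. For the second term, \eqref{eq:2.6} gives $v_\ell(\eta) \leq 1/\bigl((\ell+1)(1-2^\ell\eta)\bigr)$, hence
\beq
\label{eq:cor2pf}
3v_\ell(\eta) \leq \frac{3}{(\ell+1)(1-2^\ell\eta)} \leq \frac{4}{(\ell+2)(1-2^\ell\eta)},
\eeq
where the last inequality holds because $3(\ell+2) \leq 4(\ell+1)$ is equivalent to $\ell \geq 2$. Combining the two bounds, the maximum is at most $4/\bigl((\ell+2)(1-2^\ell\eta)\bigr)$, which is exactly \eqref{eq:2.9}.

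There is essentially no obstacle here: the only point requiring a moment's care is checking that the elementary inequality $3(\ell+1)^{-1} \leq 4(\ell+2)^{-1}$ needed to absorb the $v_\ell$ term into the stated bound is valid precisely in the range $\ell \geq 2$ (it fails for $\ell = 1$, which is why the corollary is stated only for $\ell \geq 2$), and that the side condition $\eta < \max(1/3, 2^{-\ell})$ of Theorem~\ref{th:1} reduces to $\eta < 2^{-\ell}$ once $\ell \geq 2$ since then $2^{-\ell} \leq 1/4 < 1/3$, so no range is lost. Everything else is a one-line substitution of \eqref{eq:2.6} into \eqref{eq:2.7}.
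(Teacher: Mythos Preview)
Your argument is correct and is exactly the paper's approach: apply Theorem~\ref{th:1} together with the Hardy--Littlewood bounds \eqref{eq:2.6}, and then use the elementary inequality $4/(\ell+2) \geq 3/(\ell+1)$ for $\ell \geq 2$ (with equality at $\ell = 2$) to see that the $u_\ell$-term dominates the maximum. The paper records only this last inequality as its proof; you have simply written out the surrounding substitutions and the verification of the side condition on $\eta$ in full.
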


\begin{proof}
$4/(\ell + 2) \geq 3/(\ell + 1)$ for $\ell \geq 2$ with equality for $\ell = 2$.

Denoting by $\log_2 x$ the logarithm of base $2$ and choosing $C$ large ($C = C(\eta) \to \infty$ as $\eta \to 0$, but with $C(\eta) = o(\log 1/\eta)$), we obtain for $\eta \in \bigl[1/2^{\ell(1 + o(1))}, 1/2^{\ell + C}\bigr]$ from \eqref{eq:2.7}.
\end{proof}

\begin{corollary}
\label{cor:3}
$\displaystyle
B(\eta) \leq \frac{4}{(\ell + 2)(1 - 2^{-C})} \leq \frac{4(1 + o(1))}{\log_2(1/\eta)} =
\frac{4\log 2(1 + o(1))}{\log(1/\eta)}$.
\end{corollary}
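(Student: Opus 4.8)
The plan is to optimize, as a function of $\eta$, the free integer parameter $\ell$ appearing in Corollary~\ref{cor:2}. Recall that \eqref{eq:2.9} gives, for every integer $\ell \geq 2$ with $\eta < 2^{-\ell}$, the bound $B(\eta) \leq 4\big/\big((\ell+2)(1 - 2^\ell\eta)\big)$; so one wants $\ell$ chosen as large as possible while keeping $2^\ell\eta$ safely below $1$, and then one reads off the resulting order of magnitude in $\eta$.

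First I would fix an auxiliary function $C = C(\eta)$ satisfying $C(\eta) \to \infty$ as $\eta \to 0$ but $C(\eta) = o\big(\log(1/\eta)\big)$; for definiteness one may take $C(\eta) = \big\lfloor \log_2\log_2(1/\eta)\big\rfloor$, which clearly meets both requirements. Then I would set $\ell = \ell(\eta) := \big\lfloor \log_2(1/\eta) - C(\eta)\big\rfloor$. For all sufficiently small $\eta$ this is an integer, it is $\geq 2$ and tends to infinity, and by construction $\ell + C \leq \log_2(1/\eta)$, whence $2^\ell\eta \leq 2^{-C}$ and in particular $\eta < 2^{-\ell}$. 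Thus Corollary~\ref{cor:2} applies with this $\ell$ and gives the first member of the chain in Corollary~\ref{cor:3}:
\[
B(\eta) \leq \frac{4}{(\ell+2)(1 - 2^\ell\eta)} \leq \frac{4}{(\ell+2)(1 - 2^{-C})}.
\]

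Next I would pass to the asymptotics. Since $\ell > \log_2(1/\eta) - C - 1$ we have $\ell + 2 > \log_2(1/\eta) - C + 1 = \log_2(1/\eta)\big(1 + o(1)\big)$, the error term being $o(1)$ precisely because $C = o(\log(1/\eta))$; and $1 - 2^{-C} = 1 + o(1)$ because $C \to \infty$. Multiplying these two estimates yields $B(\eta) \leq 4(1 + o(1))\big/\log_2(1/\eta)$, and the final equality in Corollary~\ref{cor:3} is just the change of base $\log_2(1/\eta) = \log(1/\eta)/\log 2$.

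Essentially everything here is routine; the only point that needs a (very mild) amount of care is the calibration of $C$, which must be tuned so that $2^{-C} \to 0$ and $C/\log(1/\eta) \to 0$ hold simultaneously — that is, $1 \ll C(\eta) \ll \log(1/\eta)$. Any admissible choice automatically places $\eta$ in the window $\big[2^{-\ell(1 + o(1))},\, 2^{-\ell - C}\big]$ used in the proof of Corollary~\ref{cor:2}, so no separate consistency check is required, and the argument goes through for all $\eta$ below some absolute threshold.
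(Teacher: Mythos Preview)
Your argument is correct and follows exactly the approach sketched in the paper: choose $C=C(\eta)\to\infty$ with $C(\eta)=o(\log(1/\eta))$ and take $\ell$ so that $\eta\in\bigl[2^{-\ell(1+o(1))},2^{-\ell-C}\bigr]$, then apply Corollary~\ref{cor:2}. You have simply made the choice of $\ell$ explicit and filled in the routine asymptotic estimates that the paper leaves to the reader.
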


This is naturally weaker than a result obtainable by Vinogradov's method (see \cite{Pin2023} and \cite{Hea2017}) but shows still $B(\eta) \to 0$ as $\eta \to 0$ beyond breaking DH in some interval for $\sigma = 1 - \eta$ close to the point $s = 1$.

However, Corollary \ref{cor:2} yields also interesting (although already known) consequences form small values of $\ell$ ($\ell = 2,3$, for example), i.e., for relatively large ranges of $\eta$.
On the other hand, they are often stronger than earlier results proved in a more complicated way (as 11.30 of Ivic \cite{Ivi1985}, for example).

\begin{corollary}
\label{cor:4}
$\displaystyle B(\eta) \leq \frac1{1 - 4\eta}$ for $\eta < 1/4$.
\end{corollary}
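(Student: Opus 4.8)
The plan is to read the bound straight off Corollary \ref{cor:2} (equivalently Theorem \ref{th:1}) by taking $\ell = 2$, which is the smallest admissible index there and — as noted in the proof of Corollary \ref{cor:2} — exactly the threshold at which $4/(\ell+2)$ and $3/(\ell+1)$ coincide, both equalling $1$. Thus the entire content of Corollary \ref{cor:4} is already packaged in Theorem \ref{th:1} together with the classical Weyl--Hardy--Littlewood estimate \eqref{eq:2.1}; no new input is needed.

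Concretely I would argue as follows. First, \eqref{eq:2.1} with $\ell = 2$ gives $\mu(\alpha_2) = \mu(3/4) \leq 1/16$, so for $\eta < 1/4 = 2^{-2}$ the quantities $u_2(\eta)$ and $v_2(\eta)$ of \eqref{eq:2.4}--\eqref{eq:2.5} are well defined and, by \eqref{eq:2.6},
\[
4u_2(\eta) \leq \frac{4}{(2+2)(1 - 4\eta)} = \frac1{1 - 4\eta}, \qquad
3v_2(\eta) \leq \frac{3}{(2+1)(1 - 4\eta)} = \frac1{1 - 4\eta}.
\]
Next, the hypothesis of Theorem \ref{th:1} for $\ell = 2$ reads $\eta < \max(1/3, 1/4) = 1/3$, which certainly holds on the range $\eta < 1/4$ in question, so \eqref{eq:2.7} applies and yields
\[
B(\eta) \leq \max\bigl(4u_2(\eta), 3v_2(\eta)\bigr) \leq \frac1{1 - 4\eta}.
\]
Equivalently, one simply substitutes $\ell = 2$ into \eqref{eq:2.9} of Corollary \ref{cor:2}.

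I do not expect any genuine obstacle: the argument is a one-line specialization, and the only things to verify are the purely arithmetic facts that $\ell = 2$ lies in the range of Theorem \ref{th:1} and Corollary \ref{cor:2} on $\eta < 1/4$, and that $4/(\ell+2) = 3/(\ell+1) = 1$ at $\ell = 2$. It may be worth appending a remark on significance: this instance recovers the Density Hypothesis in the form $B(\eta) \leq 2$ throughout $\sigma \geq 7/8$ (i.e.\ $\eta \leq 1/8$) and lies strictly below it for $\sigma > 7/8$, in accordance with point (ii) of Section \ref{sec:2}, while the apparent pole at $\eta = 1/4$ merely reflects that $\alpha_2 = 3/4$ is exactly the left endpoint of the range this method reaches, where Ingham's classical bound $A(\sigma) \leq 3/(2-\sigma)$ is of course far superior.
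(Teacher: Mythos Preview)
Your argument is correct and is exactly the paper's approach: Corollary~\ref{cor:4} is simply the case $\ell = 2$ of Corollary~\ref{cor:2} (as the paper states in the sentence preceding it), and your verification that $4/(\ell+2) = 3/(\ell+1) = 1$ at $\ell = 2$ matches the observation in the proof of Corollary~\ref{cor:2}.
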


\begin{corollary}
\label{cor:5}
$\displaystyle B(\eta) \leq \frac4{5(1 - 8\eta)}$ for $\eta < 1/8$.
\end{corollary}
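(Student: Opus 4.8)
The plan is to obtain Corollary~\ref{cor:5} as the special case $\ell = 3$ of Corollary~\ref{cor:2}. First I would check that the hypotheses of Corollary~\ref{cor:2} are met with this choice: one needs $\ell \geq 2$, which holds for $\ell = 3$, and the range restriction $\eta < 1/2^\ell$, which for $\ell = 3$ is exactly $\eta < 1/8$, i.e.\ the range in which Corollary~\ref{cor:5} is asserted. This in turn only requires Theorem~\ref{th:1} to be applicable, and it is, since $\eta < 1/8 < 1/3 = \max(1/3, 2^{-3})$.

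Second, I would substitute $\ell = 3$ into the bound \eqref{eq:2.9}. Then $\ell + 2 = 5$ and $2^\ell = 8$, so \eqref{eq:2.9} reads
\beq
B(\eta) \leq \frac{4}{(\ell + 2)(1 - 2^\ell \eta)} = \frac{4}{5(1 - 8\eta)},
\eeq
which is precisely the claimed inequality for $\eta < 1/8$. Alternatively, one can bypass Corollary~\ref{cor:2} and argue directly from Theorem~\ref{th:1}: with $\ell = 3$ it gives $B(\eta) \leq \max\bigl(4u_3(\eta), 3v_3(\eta)\bigr)$, and the Hardy--Littlewood bounds \eqref{eq:2.6}, namely $u_3 \leq 1/\bigl(5(1 - 8\eta)\bigr)$ and $v_3 \leq 1/\bigl(4(1 - 8\eta)\bigr)$, yield $4u_3 \leq 4/\bigl(5(1-8\eta)\bigr)$ and $3v_3 \leq 3/\bigl(4(1-8\eta)\bigr) \leq 4/\bigl(5(1-8\eta)\bigr)$ because $3/4 \leq 4/5$; taking the maximum gives the stated bound.

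There is essentially no obstacle here: the corollary is a pure specialization of Corollary~\ref{cor:2}, and the only points requiring (trivial) attention are that the comparison $3v_3 \leq 4u_3$ goes in the stated direction --- it does, because $3/(\ell+1) \leq 4/(\ell+2)$ for $\ell \geq 2$, as already noted in the proof of Corollary~\ref{cor:2} --- and that the asserted domain $\eta < 1/8$ coincides with $\eta < 1/2^3$. As a sanity check, $4/\bigl(5(1-8\eta)\bigr)$ is increasing on $[0,1/8)$, equals $4/5$ at $\eta = 0$, and is $< 2$ precisely for $\eta < 3/40$; thus already for $\sigma > 37/40$ it improves on the Density Hypothesis exponent $B(\eta) = 2$, consistently with the goals stated in Section~\ref{sec:2}.
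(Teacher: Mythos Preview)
Your proposal is correct and matches the paper's approach: Corollary~\ref{cor:5} is simply the case $\ell = 3$ of Corollary~\ref{cor:2}, which is exactly how the paper presents it (with no separate proof given). Your alternative direct route via Theorem~\ref{th:1} and \eqref{eq:2.6} is just the unfolding of the proof of Corollary~\ref{cor:2} at $\ell = 3$, and your sanity check that the bound beats~$2$ for $\eta < 3/40$ agrees with the paper's remark following Corollary~\ref{cor:5}.
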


Corollary \ref{cor:4} breaks the DH for $\eta \leq 1/8$, i.e. $\sigma \geq 7/8$,
while Corollary \ref{cor:5} only for $\eta \leq 3/40$.
But Corollary \ref{cor:5} is sharper than Corollary \ref{cor:4} for $\eta < 1/24$.
Even the case $\ell = 1$ of Theorem \ref{th:1} yields

\begin{corollary}
\label{cor:6}
$\displaystyle B(\eta) \leq \frac{3}{2(1 - 2\eta)}$ \ for \ $\eta < 1/2$.
\end{corollary}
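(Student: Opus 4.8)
The plan is to read Corollary \ref{cor:6} off Theorem \ref{th:1} in the extreme case $\ell = 1$, where the hypothesis is least restrictive. First I would check the range: for $\ell = 1$ the condition $\eta < \max(1/3, 2^{-\ell})$ of Theorem \ref{th:1} becomes $\eta < \max(1/3, 1/2) = 1/2$, which is precisely the range asserted in the corollary. Specializing \eqref{eq:2.7} to $\ell = 1$ then gives
\[
B(\eta) \leq \max\bigl(4 u_1(\eta),\, 3 v_1(\eta)\bigr) \qquad (\eta < 1/2).
\]

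Next I would estimate the two terms using \eqref{eq:2.6} taken at $\ell = 1$, namely
\[
u_1(\eta) \leq \frac{1}{3(1 - 2\eta)}, \qquad v_1(\eta) \leq \frac{1}{2(1 - 2\eta)},
\]
the first coming from $\mu(\alpha_1) = \mu(1/2) \leq 1/6$ (the case $\ell = 1$ of \eqref{eq:2.1}) and the second from $\mu(\alpha_0) = \mu(0) = 1/2$ (the functional equation, i.e.\ the case $\ell = 0$), substituted into the definitions \eqref{eq:2.4}--\eqref{eq:2.5}. Hence $4 u_1(\eta) \leq \tfrac{4}{3}(1 - 2\eta)^{-1}$ while $3 v_1(\eta) \leq \tfrac{3}{2}(1 - 2\eta)^{-1}$; since $\tfrac{3}{2} > \tfrac{4}{3}$, the maximum of the two equals $\tfrac{3}{2}(1 - 2\eta)^{-1}$, which is the claimed bound $B(\eta) \leq \tfrac{3}{2(1 - 2\eta)}$ for $\eta < 1/2$.

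There is essentially no obstacle here beyond the bookkeeping above: the entire content sits in Theorem \ref{th:1}. The one point worth flagging is that it is the $v$-term that is binding — governed not by the Weyl--Hardy--Littlewood bound $\mu(1/2) \leq 1/6$ but by the cruder auxiliary level $\mu(\alpha_0) = \mu(0) = 1/2$ entering through $v_1(\eta) = u_0(2\eta)$ — so the constant $3/2$ in front of $(1 - 2\eta)^{-1}$ is what the scheme produces and, since $\mu(0) = 1/2$ is exact, cannot be lowered within this approach.
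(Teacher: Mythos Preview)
Your proposal is correct and is exactly the argument the paper has in mind: the paper simply says ``Even the case $\ell = 1$ of Theorem \ref{th:1} yields'' Corollary \ref{cor:6}, and you have spelled out the bookkeeping (range check, the bounds \eqref{eq:2.6} at $\ell = 1$, and the comparison $3/2 > 4/3$) precisely. Your closing observation that the binding term is $3v_1$, governed by the exact value $\mu(0) = 1/2$, is also in line with the paper's Remark \ref{rem:3}.
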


This breaks the DH also for $\eta \leq 1/8$, that is, for $\sigma \geq 7/8$, but it is weaker than Corollary \ref{cor:4} for $\eta < 1/8$ (sharper for $\eta > 1/8$).

\begin{remark}
\label{rem:1}
Corollary \ref{cor:6} is trivial for $\eta \geq 2/7$, i.e.\ $\sigma \leq 5/7$.
\end{remark}

\begin{remark}
\label{rem:2}
It is an interesting feature of Corollary \ref{cor:6} that it gives a non-trivial result even in the range
$\eta \in [1/4, 2/7]$ in contrary to most applications of Hal\'asz's method, although it is weaker there than the classical theorem of Carlson \cite{Car1920} or the one of Ingham \cite{Ing1940}.
\end{remark}

\begin{remark}
\label{rem:3}
It is also interesting to note that the case $\ell = 1$ would yield the inequality of Corollary \ref{cor:6} with any estimate $\mu(1/2) \leq 3/16$ and any estimate of type $\mu(1/2) \leq 1/4 - c_2$ $(c_2 > 0)$ would break the DH for $\eta < c_3(c_2)$ with $c_3(c_2) > 0$.
\end{remark}

Finally, similarly to Theorem \ref{th:1} we can give an alternative proof of the case (12.10) of Theorem 12.1 of Montgomery \cite{Mon1971} which states

\begin{theorem}
\label{th:2}
$\displaystyle{B(\eta) \leq \frac2{1 - \eta}}$.
\end{theorem}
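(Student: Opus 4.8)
The plan is to carry out the argument behind Theorem~\ref{th:1} in the degenerate case ``$\ell=0$'', i.e.\ with $\alpha_0=0$ and only the classical value $\mu(0)=1/2$ at one's disposal (so \eqref{eq:2.1} is not used at all). For this choice $u_0(\eta)=\mu(0)/(\sigma-0)=1/(2\sigma)$, so the ``$4u_\ell$''-part of the estimate \eqref{eq:2.7} becomes $4u_0(\eta)=2/\sigma=2/(1-\eta)$, which is exactly the asserted bound; the point to be checked is that the ``$3v_\ell$''-branch of the method --- which at $\ell=0$ would involve the useless value $\mu(\alpha_{-1})=\mu(-1)=3/2$ --- can here be dispensed with.

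Write $\sigma=1-\eta$, fix $\lambda>0$, put $X=T^{\lambda}$ and $M(s)=\sum_{m\le X}\mu(m)m^{-s}$, so that $\zeta(s)M(s)=1+\sum_{n>X}a(n)n^{-s}$ with $a(n)=\sum_{d\mid n,\,d\le X}\mu(d)$, $|a(n)|\le d(n)$. I would first establish a zero-detecting inequality: every zero $\rho=\beta+i\gamma$ of $\zeta$ with $\beta\ge\sigma$, $T<\gamma\le 2T$, satisfies $\bigl|\sum_{X<n\le Y}a(n)n^{-\rho}\bigr|\gg 1$ for a suitable $Y=T^{O(1)}$. This comes from the smoothed identity $\sum_{n}a(n)n^{-\rho}e^{-n/Z}=\frac1{2\pi i}\int_{(2)}\zeta(\rho+w)M(\rho+w)\Gamma(w)Z^{w}\,dw$: moving the line to $\Re w=-c$ crosses no residue, because $\zeta(\rho)M(\rho)=0$, so the $n=1$ term (which is $\approx 1$) is balanced by the tail $\sum_{n>X}a(n)n^{-\rho}e^{-n/Z}$ together with the shifted integral, and the latter is $\ll Z^{-c}X^{1-\sigma+c}T^{\mu(\sigma-c)+\ve}$. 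The only input is the convexity estimate $\mu(\sigma-c)\le\tfrac12(1-\sigma+c)$ (Phragm\'en--Lindel\"of between $\mu(0)=\tfrac12$ and $\mu(1)=0$, plus the functional equation); choosing $c$ and then $Z$ just large enough to push this error below $\tfrac12$ yields the detection inequality.

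The second step bounds the number $R$ of well-spaced zeros obeying the detection inequality, and this is where Hal\'asz's idea is the crucial device: after a dyadic decomposition of $[X,Y]$ and pigeonholing (losing only a factor $T^{\ve}$) one may assume the detecting polynomial is a single dyadic block $\sum_{M<n\le 2M}a(n)n^{-\rho}$, whose frequencies $\log n$ then fill an interval of length $O(1)$, and for such a ``short-frequency'' polynomial it suffices to use the elementary second moment $\int_{0}^{2T}\bigl|\sum_{M<n\le 2M}a(n)n^{-\sigma-i\gamma}\bigr|^{2}d\gamma=\sum_{M<n\le 2M}|a(n)|^{2}n^{-2\sigma}(2T+O(n))$ --- no large sieve or large-value theorem is needed. (Zeros with $\beta>\sigma$ are absorbed by covering $[\sigma,1]$ with $O(\log T)$ horizontal strips of width $(\log T)^{-1}$, on each of which $n^{\sigma-\beta}\asymp 1$ for $n\le Y$ and so the same bound applies; this is what replaces, at $\ell=0$, the $v_\ell$-branch of Theorem~\ref{th:1}.) Using $N(\sigma,T)\ll R\log^{2}T$ and then optimising the single parameter $\lambda$ so that the small-$n$ contribution (essentially $TX^{1-2\sigma}$) balances the large-$n$ one (governed by the length $Y$, hence by $\mu(0)=\tfrac12$) should give $N(\sigma,T)\ll T^{2(1-\sigma)/\sigma+\ve}$, i.e.\ $B(\eta)\le 2/(1-\eta)$.

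The main obstacle will be the tension between the two halves: the contour shift forces $Y$ to be a genuine positive power of $T$, whereas the second-moment bound improves the smaller $Y$ is, and one has to verify that the dyadic ``short frequency window'' reduction, together with the \emph{exact} value $\mu(0)=1/2$ (rather than a lossy convexity estimate), makes the optimal exponent come out to be precisely $2(1-\sigma)/\sigma$ and not something larger --- this is the endpoint ($\ell=0$) manifestation of whatever makes the $4u_\ell$-term of Theorem~\ref{th:1} work. Exactly as there, nothing beyond the trivial second moment of a Dirichlet polynomial and the classical value $\mu(0)=1/2$ enters: no Tur\'an method, no large sieve, no Vinogradov estimate, no mean- or large-value theorem.
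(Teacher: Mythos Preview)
Your identification of the ``$\ell=0$'' endpoint and of $4u_0(\eta)=2/(1-\eta)$ is correct, but the rest of the plan misreads both what the $v_\ell$-branch is and what Hal\'asz's idea actually does. In the paper the $v_\ell$-condition is not about zeros with $\beta>\sigma$; it is the lower bound on the length $M$ (see \eqref{eq:3.4}) needed so that the off-diagonal partial sums $\sum_{n\in I(M)} n^{-(1-2\eta)-i(\gamma_j-\gamma_\nu)}$ in the Hal\'asz squaring \eqref{eq:3.15} are $O(T^{-\ve})$. At $\ell=0$ the paper does \emph{not} discard this branch; it simply shifts the contour in \eqref{eq:3.3} to $\mathrm{Re}(s+w)=\alpha_0=0$ rather than to $\alpha_{-1}$, i.e.\ redefines $v_0:=u_0(2\eta)=\tfrac{1}{2(1-2\eta)}$ (this is \eqref{eq:3.17}), and the output is $B(\eta)\le\max\bigl(2/(1-\eta),\,3/(2(1-2\eta))\bigr)$, with the first term dominating precisely for $\eta\le 1/5$.

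The second and more serious gap is that what you call ``Hal\'asz's idea'' in your Step~2 is not Hal\'asz's idea at all: the formula $\int_{0}^{2T}\bigl|\sum_{M<n\le 2M}a(n)n^{-\sigma-it}\bigr|^{2}dt=\sum|a(n)|^{2}n^{-2\sigma}(2T+O(n))$ is a mean value theorem---exactly the tool the paper says it avoids---and by itself it yields only the Ingham-type bound $R\ll (T+M)M^{1-2\sigma+\ve}$, hence $A(\sigma)\le 3/(2-\sigma)$, not $2/\sigma$. Hal\'asz's device, as executed in \eqref{eq:3.13}--\eqref{eq:3.15}, is to square the \emph{sum over zeros}, interchange, apply Cauchy--Schwarz over $n$, and then bound the resulting double sum $\sum_{j,\nu}\sum_{n\in I(M)} n^{-(1-\eta_j-\eta_\nu)-i(\gamma_j-\gamma_\nu)}$ \emph{pointwise} via \eqref{eq:3.3}; it is in that pointwise estimate (contour shifted to $\mathrm{Re}=0$) that $\mu(0)=1/2$ enters, producing the saving $K^2T^{-\ve}$ on the off-diagonal and the final $K\ll M^{2\eta}$. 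Your optimisation ``$TX^{1-2\sigma}$ balanced against the $Y$-term'' never reaches $2(1-\sigma)/\sigma$ without this step, so the proposal as written does not prove the theorem.
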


\begin{remark}
\label{rem:4}
This represents a slight improvement of the Theorem of Tur\'an (cf.\ \eqref{eq:1.12}--\eqref{eq:1.13}) since
\beq
\label{eq:2.10}
\frac2{1 - \eta} = 2 + 2\eta + O(\eta^2) \ \text{ as } \ \eta \to 0.
\eeq
\end{remark}

\begin{remark}
\label{rem:5}
The proof of Theorem \ref{th:2} does not need for any $\ell$ the estimate \eqref{eq:2.1} of Hardy and Littlewood or the estimate of Van der Corput (apart from the ``trivial'' case $\mu(0) = 1/2$ following from the functional equation).
\end{remark}

\section{Proof of Theorem \ref{th:1}}
\label{sec:3}

We will consider a maximal number $K$ of zeros
$\varrho_j = \beta_j + i\gamma_j = 1 - \eta_j + i\gamma_j$ with
$\gamma_j \in [T/2, T]$, $|\gamma_\nu - \gamma_j| \geq 1$ for
$\nu \neq j$ $(j, \nu \in [1, K])$ and $\beta_j := 1 - \eta_j \geq \sigma := 1 - \eta$.
Let $\ve$ be sufficiently small positive, not necessarily the same at different occurrences,
$\ve < \ve_0(\eta, \ell, T)$. 
Analogously let $C$ be a constant, different at different occurrences with $C \leq C(\eta, \ell)$.
Further, let $\mu$ be the M\"obius function and with $u_\ell$ in \eqref{eq:2.4} let
\begin{gather}
\label{eq:3.1}
\eta < \min(1/3, 1/2^\ell),\ \ X = T^{\ve^2},\ \ Y = T^{u_\ell + \ve},\ \ Y_1 = e^{3}Y, \ \
\lambda = \log Y, \\
\mathcal L = \max(\lambda, \log T) \qquad
M_X(s) = \sum_{n \leq X} \mu(n) n^{-s}, \qquad a_n = \sum_{d\mid n, d \leq X} \mu(d).\nonumber
\end{gather}

In the proof we will use Perron's formula \cite{Per1908} in the following special form.

\begin{lemma}
\label{lem:1}
Let $s = \sigma_0 + t$, $1 \leq |t| \leq T$,  $\max(1/3, 1 - 2\eta) \leq \sigma_0 \leq 1$, $1\leq N \leq T$, $[N_1, N_2] = I(N) \subseteq [N, 2N]$
\beq
\label{eq:3.2}
S := \underset{n \in I(N)}{\sum}\rule{0pt}{12pt}\!\! ' n^{-s} = \frac1{2\pi i} \int\limits_{(1 - \sigma_0 + 1/\mathcal L)} \zeta(s + w) \frac{N_2^w - N_1^w}{w} dw
\eeq
where the dash means that the term $n^{-s}$ is counted by a factor $1/2$ if $n = N_1$ or $N_2$.
\end{lemma}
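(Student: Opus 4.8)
The plan is to obtain this as a specialization of the classical exact form of Perron's formula \cite{Per1908}. The arithmetic input is the elementary Mellin identity: for $c>0$ and $x>0$,
\[
\frac1{2\pi i}\int_{(c)}\frac{x^w}{w}\,dw=
\begin{cases}1,&x>1,\\[2pt] \tfrac12,&x=1,\\[2pt] 0,&0<x<1,\end{cases}
\]
where the vertical integral is understood as the symmetric improper integral $\lim_{T'\to\infty}\frac1{2\pi i}\int_{c-iT'}^{c+iT'}$, and where for $x\neq 1$ the truncated integral $J_{T'}(x):=\frac1{2\pi i}\int_{c-iT'}^{c+iT'}x^w w^{-1}\,dw$ differs from its limit by $O\!\bigl(x^c/(T'|\log x|)\bigr)$ (complete the segment to a large rectangle and estimate the horizontal sides). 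This is classical and I would simply quote it, or prove it in two lines.

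Next I would note that on the line of integration $\Re w=c:=1-\sigma_0+1/\mathcal L$ one has $\Re(s+w)=1+1/\mathcal L>1$, so $\zeta(s+w)=\sum_{n\ge1}n^{-s-w}$ converges absolutely and uniformly there. Hence for each fixed $T'$ one may interchange sum and integral:
\[
\frac1{2\pi i}\int_{c-iT'}^{c+iT'}\zeta(s+w)\,\frac{N_2^w-N_1^w}{w}\,dw=\sum_{n\ge1}n^{-s}\bigl(J_{T'}(N_2/n)-J_{T'}(N_1/n)\bigr).
\]
Letting $T'\to\infty$ termwise, the Mellin identity turns the bracket into $1$ for $N_1<n<N_2$, into $\tfrac12$ for $n\in\{N_1,N_2\}$, and into $0$ otherwise; these are exactly the coefficients of $n^{-s}$ in the primed sum $S$, which yields the asserted identity.

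The one point that needs care is exchanging $\lim_{T'\to\infty}$ with the infinite sum over $n$. I would split the sum at $n=3N$: the part $n\le 3N$ is finite, so the passage to the limit is immediate; for $n>3N$ one has $N_i/n\le\tfrac23$, so $|\log(N_i/n)|$ is bounded away from $0$ and the truncation estimate gives $|J_{T'}(N_i/n)|\ll (N/n)^c/T'$, whence
\[
\sum_{n>3N}n^{-\sigma_0}\bigl(|J_{T'}(N_2/n)|+|J_{T'}(N_1/n)|\bigr)\ll\frac{N^c}{T'}\sum_{n>3N}n^{-\sigma_0-c}\ll\frac{\mathcal L\,N^{1-\sigma_0}}{T'}\ \to\ 0\quad(T'\to\infty),
\]
the convergence of the $n$-sum being exactly the statement $\sigma_0+c=1+1/\mathcal L>1$; and the limiting contribution of these $n$ is $0$. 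This conditional (not absolute) convergence of the $w$-integral is the only mild obstacle, and it is the standard bookkeeping behind Perron's formula, so I expect no genuine difficulty. I note in passing that the hypotheses $\max(1/3,1-2\eta)\le\sigma_0\le1$, $1\le|t|\le T$, $1\le N\le T$ play no role in the identity itself (only $\Re(s+w)>1$ on the line is used); they are recorded because they delimit the range in which the lemma will subsequently be applied.
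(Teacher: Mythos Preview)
Your proof is correct and is precisely the classical derivation of Perron's formula; the paper does not actually prove Lemma~\ref{lem:1} but merely quotes it as a special form of Perron's formula \cite{Per1908}, so your argument supplies exactly the details that the paper suppresses. Your observation that the side conditions on $\sigma_0$, $t$, $N$ are irrelevant to the identity itself and only matter for the subsequent application (the contour shift leading to \eqref{eq:3.3}) is also correct.
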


Deforming the way of integration along the horizontal lines $\text{\rm Im } w = \pm 2T$ from the segment of $\text{\rm Re }w = 1 - \sigma_0 + 1/\mathcal L$ to the vertical line segment $\text{\rm Re }w = \alpha_{\ell - 1} - \sigma_0$ $\text{\rm Im }w = [-2T, 2T]$ and counting the contribution of the pole at $w = 1 - s$, cutting the third term into two parts according to $\kappa \leq 0$, or $\kappa \geq 0$ we obtain
\begin{align}
\label{eq:3.3}
S &\ll \frac{N^{1 - \sigma_0}}{|t|} + \mathcal L N^{-(\sigma_0 - \alpha_{\ell - 1})} T^{\mu(\alpha_{\ell - 1}) + o(1)} + \max_{\alpha_{\ell - 1} - \sigma_0 \leq \kappa \leq 1 - \sigma_0 + 1/\mathcal L}
N^\kappa T^{\mu(\sigma_0 + \kappa) - 1 + o(1)} \\
&\ll \frac{N^{1 - \sigma_0}}{|t|} + T^{-\ve} + T^{\mu(\alpha_{\ell - 1}) - 1 + o(1)} +
T^{1 - \sigma_0 + \mu(\sigma_0) - 1 + o(1)} \ll \frac{N^{1 - \sigma_0}}{|t|} + T^{-\ve},\nonumber
\end{align}
by $\sigma_0 + \kappa \geq \alpha_{\ell - 1}$, $N\leq T$, $\mu(\sigma_0) \leq \sigma_0 - \ve$,
$\sigma_0 - \alpha_{\ell - 1} \geq \frac1{2^{\ell - 1}} - 2\eta = 2(2^{-\ell} - \eta)$ if
\beq
\label{eq:3.4}
N \gg T^{v_\ell + \ve} = T^{u_{\ell - 1}(2\eta) + \ve} \Longleftrightarrow N^{1/2^{\ell - 1}} - 2\eta \gg
T^{\mu(\alpha_{\ell - 1}) + \ve}.
\eeq

Finally we will use for any fixed $k$ the well known property of the generalized divisor function:
\beq
\label{eq:3.5}
\tau_k(n) = \sum_{n_1 n_2 \ldots n_k = n} 1 \ll n^{o(1)} \ \text{ for } \ n \to \infty.
\eeq

\begin{rema}
If $N \geq T$ then the simple Theorem 4.11 of \cite{Tit1951} (provable by simple partial summation) yields
\beq
\label{eq:3.6}
S \ll N^{1 - \sigma_0}\bigm/ |t| + O(N^{-\sigma_0}).
\eeq
\end{rema}

Similarly to (4.2) of \cite{Pin2023} we start with
\begin{align}
\label{eq:3.7}
I_j :&= \frac1{2\pi i} \int\limits_{(3)} \sum_{n = 1}^\infty \frac{a_n}{n^{s + \varrho_j}} \frac{e^{s^2/\mathcal L + \lambda s}}{s} ds = \frac1{2\pi i} \int\limits_{(3)} M_X (s + \varrho_j)\zeta (s + \varrho_j) \frac{e^{s^2/\mathcal L + \lambda s}}{s} ds\\
&= \frac1{2\pi i} \int\limits_{(\alpha_\ell - \beta_j)} M_X(s + \varrho_j) \frac{\zeta(s + \varrho_j)}{s} e^{s^2/\mathcal L + \lambda s} ds + O \left(\frac{\mathcal L Y^{\eta_j}}{|\gamma_j|} e^{-\gamma_j^2/\mathcal L}\right) \nonumber\\
&\ll X \int\limits_{-\infty}^\infty \frac{|\gamma_j + t|^{\mu(\alpha_\ell) + o(1)}}{1 + |t|} e^{-|\gamma_j + t|^2/\mathcal L} Y^{-(1/2^\ell - \eta)} dt + O \left(e^{-T^2/5\mathcal L}\right)\nonumber\\
&\ll \mathcal L T^{\mu(\alpha_\ell) + o(1)} Y^{-(1/2^\ell - \eta)} + O\left(e^{-T^2/5\mathcal L}\right) = o(1), \nonumber
\end{align}
where the error term represented the contribution of the pole of $\zeta$ at $s = 1 - \varrho_j = \eta_j - i\gamma_j$ and we used the trivial estimates $|M_X(s)| \leq X$, $M_X(1) \ll \mathcal L$.

On the other hand, we can evaluate the LHS of \eqref{eq:3.7} according to the value of $n$.
First we note that $a_n = 0$ for $1 < n \leq X$ and for $n = 1$ we can shift the line of integration to $\text{\rm Re }s = -4$.
The pole at $s = 0$ contributes $1$ and the integral is $O(\mathcal L Y^{-4}) = o(1)$.
Further, we can shift the line of integration to $\text{\rm Re }s = \mathcal L$ for $n > Y_1 = Ye^3$, obtaining by $|a_n| \leq \tau(n) \leq 2 \sqrt{n} \ll \left| n^{\varrho_j}\right|$,
$\sum\limits_{n > M} n^{-u} \ll M^{-(u - 1)}$ the relation
\beq
\label{eq:3.8}
\int\limits_{(\mathcal L)} \sum_{n \geq e^{\lambda + 3}} \frac{a_n}{n^{s + \varrho_j}} e^{s^2/\mathcal L + \lambda s} ds \ll e^{-(\lambda + 3)(\mathcal L - 1) + \lambda \mathcal L} \int\limits_{-\infty}^\infty
e^{(\mathcal L^2 - t^2)/\mathcal L} dt = o(1).
\eeq
Summarizing the above we get
\beq
\label{eq:3.9}
\sum_{x < n < Y_1} \frac{a_n}{n^{\varrho_j}} f(n) = 1 + o(1),
\eeq
where by $\lambda \ll \mathcal L$ we have for every $n \geq 1$
\begin{align}
\label{eq:3.10}
f(n) :&= \frac1{2\pi i} \int\limits_{(3)} \frac{e^{s^2/\mathcal L + (\lambda - \log n)s}}{s} ds = \frac1{2\pi i} \int\limits_{(1/\mathcal L)} \frac{e^{s^2/\mathcal L + (\lambda - \log n)s}}{s} ds\\
&\ll \int\limits_{-\infty}^\infty \frac{e^{-t^2/\mathcal L}}{|1/\mathcal L + it|} dt \ll \mathcal L \log \mathcal L.\nonumber
\end{align}

From this we obtain by a dyadic subdivision of $(X, Y_1)$ for some $U \in(X, Y_1)$ and $I(U) \subseteq [U, 2U]$
\beq
\label{eq:3.11}
\sum_{j = 1}^K \biggl|\sum_{n \in I(U)} a_n^* n^{-\varrho_j}\biggr| \gg \frac{K}{\mathcal L} \ \text{ with }\ a_n^* = a_n f(n).
\eeq

Our strategy is to raise the Dirichlet polynomial $\sum a_n^* n^{-s}$ with $n \in I(U)$ to a suitable integral power $h$ bounded by $\log Y_1/\log X \leq C\ve^{-2}$,
to reach a polynomial with $n \in \bigl[U^h, (2U)^h\bigr]$ where $h$, i.e., $U^h$ is minimal with the condition $U^h > T^{v_\ell + \varepsilon}$ in order to satisfy \eqref{eq:3.4} with $U = N$.
The resulting polynomial will have coefficients $b_h^* \ll \tau_h(n)(\log \mathcal L)^h \mathcal L^h$
by \eqref{eq:3.10} and $|a_n| \leq \tau(n)$.

Further, with a suitable value of $M \in [U^h, (2U)^h]$, \eqref{eq:3.11} can be substituted by
\beq
\label{eq:3.12}
\sum_{j = 1}^K \biggl|\sum_{n \in I(M)} b_n^* n^{-\varrho_j}\biggr| \gg \frac{K}{\mathcal L^h},
\eeq
using H\"older's inequality.
If $U \geq T^{v_\ell + \ve)/2} = T^{(u_{\ell - 1}(2\eta) + \ve)/2}$ we take $h = 2$, while for $X \leq U \leq T^{(v_\ell + \ve)/2}$
we can find an $h \in \mathbb Z^+$ with $U^h \in \Bigl(T^{v_\ell + \ve}, T^{3(v_\ell + \ve)/2}\Bigr)$.
Let us define now the numbers $\varphi_j$ with $|\varphi_j| = 1$ so that
\beq
\label{eq:3.13}
\biggl| \sum_{n \in I(M)} b_n^* n^{-\varrho_j} \biggr| = \varphi_j \sum_{n \in I(M)} b_n^* n^{-\varrho_j} \qquad (j = 1,2, \ldots, K)
\eeq
should hold.
Hal\'asz's idea is to square the LHS of \eqref{eq:3.13}, interchange the order of summation over $j$ and $n$ and use the Cauchy--Schwarz inequality for the sum when $n$ runs through elements of $I(M)$ with
\beq
\label{eq:3.14}
b_n^* n^{-\varrho_j} = b_n^* n^{-1/2} \cdot n^{-1/2 + \eta_j - i\gamma_j}\ \ \ (n \in I(M)).
\eeq

This gives from \eqref{eq:3.2}--\eqref{eq:3.5} and \eqref{eq:3.12}--\eqref{eq:3.13} separating in the second term the diagonal terms (those with $\nu = j$).
\begin{align}
\label{eq:3.15}
\frac{K^2}{\mathcal L^{2h}} &\ll \Biggl(\sum_{j = 1}^K \varphi_j \sum_{n \in I(M)} b_n^* n^{-\varrho_j}\biggr)^2 = \biggl(\sum_{n \in I(M)} b_n^* n^{-1/2} \sum_{j = 1}^K \varphi_j n^{-1/2 + \eta_j - i\gamma_j}\biggr)^2\\
&\ll \biggl(\sum_{n \in I(M)} \frac{|b_n^*|^2}{n}\biggr)
\biggl(\sum_{j = 1}^K \sum_{\nu = 1}^K \varphi_j \overline{\varphi}_\nu \sum_{n \in I(M)} \frac1{n^{1 - \eta_j - \eta_\nu + i(\gamma_j - \gamma_\nu)}}\biggr)\nonumber\\
&\ll T^{o(1)} \biggl(K(K - 1)T^{-\ve} + M^{2\eta} \sum_{j = 1}^K \sum_{\nu = 1}^K \frac1{|\gamma_j - \gamma_\nu|} + KM^{2\eta}\biggr) \nonumber\\
&\ll K^2 T^{-\ve/2} + KM^{2\eta} T^{o(1)}, \nonumber
\end{align}
leading by $M \asymp U_h^2 \leq \max\left(Y_1^2, T^{3v_\ell/2 + \ve}\right)$ to the final estimate
\beq
\label{eq:3.16}
\hspace*{20mm} K \ll M^{2\eta} T^{o(1)} \ll T^{\eta \max (4 u_\ell , 3 v_\ell) + \ve}.\hspace*{30mm}\hfill\square
\eeq

\medskip
The proof of Theorem \ref{th:2} is essentially the same as that of Theorem~\ref{th:1}.
The change is that in place of \eqref{eq:2.5} we choose now
\beq
\label{eq:3.17}
v_\ell = v_\ell(\eta) =: u_\ell(2\eta) \ \text{ and } \ \ell = 0.
\eeq
We remark that we can suppose $\eta < 1/3$ since otherwise $2/(1 - \eta) \geq 1/\eta$.
From \eqref{eq:3.17} we obtain similarly to \eqref{eq:3.16} by $\alpha_0 = 0$, $\mu(0) = 1/2$
\beq
\label{eq:3.18}
B(\eta) \leq \max(4 u_0(\eta), 3u_0(2\eta)) = \max \left(\frac2{1 - \eta}, \frac3{2(1 - 2\eta)}\right),
\eeq
i.e.,
\beq
\label{eq:3.19}
B(\eta) \leq \begin{cases}
\frac{2}{1 - \eta} &\text{ for } \eta \leq 1/5,\\
\frac{3}{2(1 - 2\eta)} &\text{ for } \eta \geq 1/5.
\end{cases}
\eeq
We note that together with Ingham's estimate (cf.\ (12.9) of Theorem 12.1 of \cite{Mon1971}) this yields
\beq
\label{eq:3.20}
A(\sigma) \leq 5/2,
\eeq
and so (with the special treatment of the immediate neighbourhood of $\sigma = 1$)
we obtain by \eqref{eq:1.5}--\eqref{eq:1.6} the result of Montgomery,
\beq
\label{eq:3.21}
p_{n + 1} - p_n \ll_\ve p_n^{3/5 + \ve}.
\eeq

{\small
\noindent
J\'anos Pintz\\
ELKH Alfr\'ed R\'enyi Mathematical Institute\\
H-1053 Budapest\\
Re\'altanoda u. 13--15.\\
Hungary\\
e-mail: pintz{@}renyi.hu}

\end{document}